\newtheorem{myth}{Theorem}
\newtheorem{mydef}{Definition}
\newtheorem{myremark}{Remark}
\begin{document}

\title{\LARGE \bf
Revealing Decision Conservativeness Through Inverse Distributionally Robust Optimization
}

\author{Qi Li, Zhirui Liang, Andrey Bernstein, Yury Dvorkin
\thanks{Qi Li and Zhirui Liang contributed equally to this work and their names are ordered alphabetically. Qi Li, Zhirui Liang, and Yury Dvorkin are with the Johns Hopkins University, Baltimore MD, U.S. (e-mail: \{qli112, zliang31, ydvorki1\}@jhu.edu).
Andrey Bernstein is with the National Renewable Energy Laboratory, Golden CO, U.S. (e-mail: andrey.bernstein@nrel.gov)}%
}

\maketitle

\begin{abstract}
This paper introduces Inverse Distributionally Robust Optimization (I-DRO) as a method to infer the conservativeness level of a decision-maker, represented by the size of a Wasserstein metric-based ambiguity set, from the optimal decisions made using Forward Distributionally Robust Optimization (F-DRO). By leveraging the Karush-Kuhn-Tucker (KKT) conditions of the convex F-DRO model, we formulate I-DRO as a bi-linear program, which can be solved using off-the-shelf optimization solvers. Additionally, this formulation exhibits several advantageous properties. We demonstrate that I-DRO not only guarantees the existence and uniqueness of an optimal solution but also establishes the necessary and sufficient conditions for this optimal solution to accurately match the actual conservativeness level in F-DRO. Furthermore, we identify three extreme scenarios that may impact I-DRO effectiveness. Our case study applies F-DRO for power system scheduling under uncertainty and employs I-DRO to recover the conservativeness level of system operators. Numerical experiments based on an IEEE 5-bus system and a realistic NYISO 11-zone system demonstrate I-DRO performance in both normal and extreme scenarios.
\end{abstract}

\section{Introduction}
Real-world optimization problems often require dealing with parameter uncertainty. Ambiguity sets can model this uncertainty by encapsulating a range of plausible probability distributions for the uncertain parameters. Distributionally Robust Optimization (DRO) extends beyond a single empirical distribution to find solutions effective across multiple distributions within the ambiguity sets, facilitating the inclusion of a conservativeness level in the decision-making process \cite{mohajerin2018data}. 
The conservativeness level can be affected by factors such as the quality of available information and the potential impact of negative outcomes. In DRO models, this conservativeness level is quantified by the size of the ambiguity set.

Inverse Optimization (IO) aims to infer unknown parameters within a Forward Optimization (FO) problem based on the observed optimal solutions of FO. A comprehensive review of the IO methods can be found in \cite{chan2023inverse}. IO is widely applied across various fields, including power systems and power markets. Studies such as \cite{ruiz2013revealing} and \cite{chen2017strategic} employed IO to reconstruct the piece-wise linear cost functions of generators from historical market-clearing data. Similarly, \cite{liang2023data} developed a data-driven IO model to estimate generator costs based on noisy data. These papers assume that all parameters, except for offer prices, are known. Conversely, \cite{birge2017inverse} applies IO to recover undisclosed market structures, like transmission line parameters. Additionally, IO has been used to estimate demand response parameters in power systems, as discussed in  \cite{saez2016data} and \cite{kovacs2021inverse}. However, these studies primarily focus on recovering specific unknown parameters within power systems, rather than abstract features of the decision-making model, such as the conservativeness level of the decision-maker.

Most research on the classical IO approach focuses on convex FO problems \cite{ahuja2001inverse,chan2020inverse,aswani2018inverse,mohajerin2018dataio}, with some exceptions exploring inverse discrete models \cite{schaefer2009inverse}. This paper builds upon the foundation of classical IO but targets a more specialized scenario by considering a DRO with a Wasserstein metric-based ambiguity set as the forward problem, where the value of the Wasserstein metric directly reflects the level of conservativeness. It effectively bridges the gap between Wasserstein metric-based DRO problems and the classical IO approach.

The motivation of this research is to replicate or predict actions of the decision-maker based on the Forward DRO (F-DRO) framework. Given the subjective nature of conservativeness, even individuals with identical prior knowledge may differ in their conservative decisions. 
To this end, we introduce an Inverse DRO (I-DRO) approach to recover the value of the Wasserstein metric from optimal decisions generated by F-DRO. 
This method allows external parties to more accurately forecast the decision-maker’s future actions, thereby enhancing their predictive capability and increasing their potential for profit.

The I-DRO framework has many potential applications within power systems. For example, consider a power market clearing problem solved by the market operator as the F-DRO problem. Market participants, aiming to maximize their profits, could employ a bi-level optimization framework to design optimal offer curves, as discussed in \cite{hobbs2000strategic}. Using I-DRO, market participants can estimate the decision conservativeness of the market operator. With this knowledge, they could replicate the market clearing process within the lower level of their own decision-making model, potentially leading to better-designed offer curves and increased profits. Although stochastic optimization, including DRO, has not been extensively implemented in current power system scheduling and market clearing processes, there are practical instances, such as \cite{abbaspourtorbati2015swiss}, and numerous theoretical studies, such as \cite{mieth2020risk}, that demonstrate its potential. 

To the best of our knowledge, this study is the first to propose an I-DRO framework. It differs from previous studies of the \textit{robust inverse optimization} (RIO)\cite{ghobadi2018robust} or \textit{distributionally robust inverse optimization} (DRIO) methods \cite{dong2021wasserstein}, which both aim to impute the unknown parameters in the FO model that are robust against multiple observed solutions. 
In these studies, the FO models could be either a linear or quadratic program, whereas the IO models entailed robust optimization \cite{ghobadi2018robust} or DRO \cite{dong2021wasserstein}.
A more closely related work is \cite{chassein2018variable}, designated as \textit{Inverse Robust Optimization} (I-RO), which determines the size of the uncertainty set in a forward robust optimization problem that renders a specific solution optimal.
Our contributions in this paper include:
\begin{itemize}
    \item Introducing an I-DRO model to recover conservativeness levels of the decision-maker from optimal decisions derived via F-DRO. 
    \item Providing rigorous proof of the existence and uniqueness of the optimal solution to I-DRO.
    \item Establishing the necessary and sufficient conditions to ensure the correctness of the conservativeness levels recovered by I-DRO and identifying three extreme scenarios that may lead to the failures of I-DRO. 
    \item Evaluating the performance of I-DRO within the power system domain, with the chance-constrained DC optimal power flow model as F-DRO. 
\end{itemize}

Following notations are used in this paper: $\Xi$ is the domain of a random variable $\xi$ (i.e., $\xi \in \Xi$), $\mathcal{M}(\Xi)$ contains all distributions defined on $\Xi$.

\section{Overview of Inverse Optimization}
\label{sec:IO_review}
Consider a convex problem where both the objective function and constraints depend on a decision variable $x$ and a parameter $\omega$ ($x$ and $\omega$ can be scalars or vectors). We define our FO problem as:
\begin{subequations} \label{general convex programming}
    \begin{align}
        \textbf{FO}(\omega):  \min_{x} \quad   &f(x,\omega) \\
        \mathrm{s.t.} \quad (\nu_i)  \quad &g_i(x,\omega) \leq 0,\  i= 1, \dots, m, \label{inequality constraint}
    \end{align}
\end{subequations}
where $f$ and $\{g_i\}_{i=1}^{m}$ are differentiable and convex functions in $x$, and $\{\nu_i\}_{i=1}^{m}$ are dual variables associated with \eqref{inequality constraint}. We define $\mathcal{X}(\omega)$ as the set of feasible solutions and $\mathcal{X}^{\text{opt}}(\omega)$ as the set of optimal solutions to FO($\omega$), respectively, with the assumption that both sets are non-empty.

Consider a scenario where an individual is unaware of the value of $\omega$ but observes a decision $x^0$ resulting from FO($\omega$). This individual's goal is to recover the value of $\omega$ using IO based on the observed decision $x^0$. In a nominal setting, this goal is achieved by finding an $\omega$ such that $x^0 \in \mathcal{X}^{\text{opt}}(\omega)$, i.e., the Karush-Kuhn-Tucker (KKT) conditions of FO($\omega$) in \eqref{general convex programming} are satisfied. Thus, we formulate this IO problem as:
\begin{subequations} \label{convex KKT}
    \begin{align} 
        \textbf{IO-nominal}  &(x^0): \min_{\substack{x,\omega,\nu_i}} \ \left\Vert \omega-\bar{\omega} \right\Vert \label{FO_kkt:obj}\\
         \quad \mathrm{s.t.}   \quad &\nabla_x f(x,\omega) + \sum\nolimits_{i=1}^{m} \nu_i \nabla_x g(x,\omega) =0 \label{FO_kkt:stat}\\
        \quad &\nu_i g(x,\omega) = 0 ,\  i= 1, \dots, m \label{FO_kkt:comp}\\
        \quad & g_i(x,\omega) \leq 0,\  i= 1, \dots, m  \label{FO_kkt:prim}\\
        \quad & \nu_i \ge 0 ,\  i= 1, \dots, m \label{FO_kkt:dual} \\
        \quad & \omega \in \Omega, \label{FO_kkt:omega} \\
        \quad & x = x^0, \label{FO_kkt:x} 
    \end{align}
\end{subequations} 
where \eqref{FO_kkt:obj} seeks for $\omega$ that minimizes the distance to a predefined value $\bar\omega$ (an initial belief or estimate regarding $\omega$), \eqref{FO_kkt:stat} is the stationarity constraint, \eqref{FO_kkt:comp} ensures complementary slackness, \eqref{FO_kkt:prim} embodies primal feasibility, \eqref{FO_kkt:dual} denotes dual feasibility, $\Omega$ is the feasible region for parameter $\omega$. 

The IO problem in \eqref{convex KKT} assumes that the primal solution $x$ is fully observable. However, real-world situations may only permit partial observability of vector $x$, i.e., only a subset of elements from vector $x^0 \in \mathcal{X}^{\text{opt}}(\omega)$ can be observed. The observable elements are represented as $\{{x}^0_{i} \}_{\forall i \in \mathcal{N}}$, where $\mathcal{N}$ collects indices of the observable elements. The IO problem that accommodates partial observations, termed as partially constrained inverse problem, can be formulated as:
\begin{subequations} \label{partial IO}
    \begin{align}
         \textbf{IO-partial} (\{{x}^0_{i}\}_{\forall i \in \mathcal{N}}):  &\min_{\substack{x, \omega,\nu_i}} \quad  \left\Vert \omega-\bar{\omega} \right\Vert \label{FO_kkt_partial:obj} \\
        \mathrm{s.t.}  \quad & \eqref{FO_kkt:stat} - \eqref{FO_kkt:omega}  \nonumber \\
         \qquad & x_{i} = {x}^0_{i}, \ \forall i \in \mathcal{N}. 
    \end{align}
\end{subequations} 

\section{Forward DRO Formulation}
\label{sec:Forward_model}
This section introduces a standard chance-constrained linear program that incorporates a random variable. We then define an ambiguity set using the Wasserstein metric and subsequently reformulate the chance-constrained problem into a convex DRO problem. This convex DRO formulation is employed as the forward problem throughout the paper.
\subsection{Reformulation of Chance-Constrained Linear Program}
Consider the following linear program with $N^{oc}$ ordinary constraints and $N^{cc}$ chance constraints:
\begin{subequations} \label{general LP with CC}
    \begin{align}
        \min_{x} \quad & \quad  c^\top x \label{general objective}\\
        \mathrm{s.t.} \quad & \quad Ax \leq h \label{general normal constraint}\\
         & \quad \inf_{\mathbb{P} \in \mathcal{B}_{\epsilon}} \text{Pr}\left\{ Bx+D\xi \leq d \right\} \geq 1-\gamma,  \label{general CC constraint}
    \end{align}
\end{subequations}
where $x \in \mathbb{R}^{n}$ is a decision vector, and $\xi \in \mathbb{R}^{m}$ is a random variable following distribution $\mathbb{P}$.
The parameters $A \in \mathbb{R}^{N_{oc} \times n}$, $h \in \mathbb{R}^{N_{oc}}$, $B \in \mathbb{R}^{N_{cc} \times n}, D \in \mathbb{R}^{N_{cc} \times m}$, $d \in \mathbb{R}^{N_{cc}}$ are constant. 
The joint chance constraint in \eqref{general CC constraint} ensures that the probability of $Bx+D\xi$ being less than $d$ is at least $1-\gamma$, under the variability of $\xi$'s distribution $\mathbb{P}$ within the ambiguity set $\mathcal{B}_{\epsilon}$.

Chance constraint \eqref{general CC constraint} is hard to solve directly, so we reformulate it using conditional value-at-risk (CVaR) and Wasserstein metric-based ambiguity sets. 
CVaR is preferred over other risk measures as it is convex, translation-equivariant, and positively homogeneous \cite{rockafellar2000optimization}, while Wasserstein ambiguity sets outperform other formulations, like moment-based sets, in out-of-sample tests \cite{mohajerin2018data}.

We first give the definition of the Wasserstein metric, a measure of distance between two probability distributions:
\begin{mydef}    
(\textbf{Wasserstein Distance}) The Wasserstein metric $d_{W}(\mathbb{P}_1, \mathbb{P}_2): \mathcal{M}(\Xi) \times \mathcal{M}(\Xi) \rightarrow \mathbb{R}$ is defined by:
    \begin{equation} \nonumber
        d_{W}(\mathbb{P}_1,\mathbb{P}_2) = \inf_{\pi\in \Pi(\mathbb{P}_1,\mathbb{P}_2)}\left\{ \int_{\Xi^2}\Vert \xi_1-\xi_2 \Vert\pi(d\xi_1,d\xi_2) \right\},
    \end{equation}
where $\Pi(\mathbb{P}_1,\mathbb{P}_2)$ is the set of joint probability distributions of $\xi_1,\xi_2$ with marginal distributions of $\mathbb{P}_1$ and $\mathbb{P}_2$ \cite{mohajerin2018data}.
\end{mydef}

Given $N_s$ historical samples $\{ \hat{\xi_i}\}_{i=1}^{N_s}$, the empirical distribution $\hat{\mathbb{P}}_{N_s}$ can be derived as:
\begin{equation} \label{empirical distribution}
    \hat{\mathbb{P}}_{N_s} = \frac{1}{N_s} \sum\nolimits_{i=1}^{N_s} \delta_{\hat{\xi}_i},
\end{equation}
where $\delta_{\hat{\xi}_i}$ is a Dirac distribution centered at $\hat{\xi}_i$. The ambiguity set based on these historical samples is defined as a Wasserstein ball centered at $\hat{\mathbb{P}}_{N_s}$: 
\begin{equation} \label{ambiguity set}
    \mathcal{B}_{\epsilon}(\hat{\mathbb{P}}_{N_s}) = \left\{ \mathbb{P} \in \mathcal{M}(\Xi) \left\vert d_{W}\left(\mathbb{P}, \hat{\mathbb{P}}_{N_s}\right) \right.\leq \epsilon \right\},
\end{equation}
where $\epsilon$ is the radius of the Wasserstein ball.  
The value of $\epsilon$ determines the level of decision conservativeness, reflecting a decision-maker's confidence in the data quality \cite{mieth2023data}. It is generally considered to be confidential information belonging to the decision-maker.
A larger $\epsilon$ incorporates a broader range of possible distributions in the decision making model, potentially resulting in decisions that are more conservative and thus robust against less probable realizations of uncertainty. 

Following the method used in \cite{mieth2023data}, we can initially transform \eqref{general CC constraint} into the following equivalent form:
\begin{equation} \label{simplified cc 1}
    \inf_{\mathbb{P} \in \mathcal{B}_{\epsilon}}\text{Pr} \left\{ \max_{k=1,...,N_{cc}} \left[ a_{k}^{\prime\top} \xi + b_{k}^{\prime} \right] \leq 0\right\} \geq 1- \gamma,
\end{equation} 
where $a_{k}^{\prime}$ is the $k$-th row of matrix $D$, and $b_{k}^{\prime}$ is the $k$-th entry of vector $Bx-d$. The CVaR-based reformulation of \eqref{simplified cc 1} is given by:
\begin{equation} \label{simplified CC 2}
    \inf_{\tau} \mathbb{E}^{\mathbb{P}} \left[\tau + \frac{1}{\gamma} \max_{k=1,...,N_{cc}} \left[ a^{\prime\top}_k \xi + b^{\prime}_k -\tau \right]_{+}\right] \leq 0, 
\end{equation}
where $\tau \in \mathbb{R}$ is an auxiliary variable, and $\left[x\right]_{+}$ is an operator returning $\max\left\{x,0\right\}$. It is proved in \cite{rockafellar2000optimization} that enforcing \eqref{simplified CC 2} implies that \eqref{simplified cc 1} also holds. Then, we can transform \eqref{simplified CC 2} into the following distributionally robust form:
\begin{equation}
    \label{simplified CC 3}
    \inf_{\tau} \tau + \frac{1}{\gamma} \underbrace{ \sup_{\mathbb{P} \in \mathcal{B}_{\epsilon}(\hat{\mathbb{P}}_{N_s})}\mathbb{E}^{\mathbb{P}} \left[ \max_{k=0,1,...,N_{cc}} \overline{a}_{k}^{\top} \xi + \overline{b}_k \right]}_{\text{Inner supremum problem}} \leq 0. 
\end{equation}
This reformulation is derived by introducing $\overline{a}_{0} = \mathcal{O}_{m}$, $\overline{b}_{0} = 0$, where $\mathcal{O}_{m}$ denotes a all-zero vector of length $m$, as well as $\overline{a}_{k} = a_{k}^{\prime}$ and $\overline{b}_{k} = b_{k}^{\prime} - \tau$, $k=1,...,N_{cc}$. 

The inner supremum problem in \eqref{simplified CC 3} aims to identify the worst-case expectation within the ambiguity set $\mathcal{B}_{\epsilon}(\hat{\mathbb{P}}_{N_s})$. This problem is hard to solve directly due to its infinite dimension but can be reformulated to a convex form using the Wasserstein metric \cite{mohajerin2018data}. Consequently, we reformulate the inner supremum problem in \eqref{simplified CC 3} as:
\begin{subequations} \label{DRO convex reformulation}
    \begin{align}
        \inf_{\lambda, s_i, \tau} \quad & \quad \sum\nolimits_{j=1}^{m} \left( \lambda_{j} \epsilon + \frac{1}{N_s} \sum\nolimits_{i=1}^{N_s} s_{ij} \right) \\
        \mathrm{s.t.} \quad &  \quad s_{ij} \geq  a_{kj} \hat{\xi}_{ij} + b_k \\
         & \quad s_{ij} \geq a_{kj} \overline{\xi}_j + b_k - \lambda_{j} \vert \overline{\xi}_{j} - \hat{\xi}_{ij}\vert \\
         & \quad s_{ij} \geq a_{kj} \underline{\xi}_{j} + b_k - \lambda_{j} \vert \underline{\xi}_{j} - \hat{\xi}_{ij} \vert \\
         & \quad \lambda_{j} \geq 0\\
         &\quad i = 1,\dots,N_s, \ j = 1,\dots,m, \ k = 0,\dots,N_{cc}, \nonumber 
    \end{align}%
\end{subequations}%
\allowdisplaybreaks[0]%
where $a_{kj}$ is the $j$-th coordinate of $\overline{a}_{k}$, $b_k = \overline{b}_k/m$, $\underline{\xi}_j$ and $\overline{\xi}_j$ are the lower and upper bounds, respectively, of the $j$-th coordinate of $\Xi$ ($\xi \in \Xi$), and $\lambda_{j}$ is an auxiliary variable.

\subsection{Convex DRO Formulation}
By substituting the chance constraint in \eqref{general CC constraint} with the reformulations in \eqref{simplified CC 3} and \eqref{DRO convex reformulation}, we convert \eqref{general LP with CC} into the following finite-dimensional convex problem:
\begin{subequations} \label{forward DRO}
    \begin{align}
        \textbf{F-DRO}&(\epsilon): \min_{x,\tau,\lambda_{j},s_{ij}} \quad   c^{\top} x \\ 
        \mathrm{s.t.} \ & (\theta) \quad Ax \leq h \label{primal feasibility start} \\
         & (\mu) \ \ \tau +  \frac{1}{\gamma}\sum\limits_{j=1}^{m} \left( \lambda_{j} \epsilon + \frac{1}{N_s} \sum\limits_{i=1}^{N_s} s_{ij}\right) \leq 0 \label{CVaR constraints}\\
         & (\phi_{ikj}^{1}) \  a_{kj} \hat{\xi}_{ij} + b_k - s_{ij} \leq 0 \label{ab1}\\
         & (\phi_{ikj}^{2}) \ a_{kj} \overline{\xi}_{j} + b_k - \lambda_{j} \vert \overline{\xi}_{j} - \hat{\xi}_{ij}\vert -s_{ij} \leq 0 \label{ab2} \\
         & (\phi_{ikj}^{3}) \ a_{kj} \underline{\xi}_{j} + b_k - \lambda_{j} \vert \underline{\xi}_{j} - \hat{\xi}_{ij}\vert -s_{ij} \leq 0 \label{ab3}\\
         & (\eta_{j}) \quad \lambda_{j} \geq 0\label{primal feasibility end} \\
         & i = 1,...,N_{s}, \ j=1,\dots,m, \ k = 0,...,N_{cc}. \nonumber 
    \end{align}%
\end{subequations}%
\allowdisplaybreaks[0]%
Greek letters listed on the left side of \eqref{primal feasibility start}-\eqref{primal feasibility end} are dual variables of the corresponding constraints. $\tau, \lambda_{j}$ and $s_{ij}$ are introduced into \eqref{forward DRO} during the reformulation of \eqref{general CC constraint}. It is important to note that the primal variable $x$ appears not only in \eqref{primal feasibility start} but also in \eqref{ab1}-\eqref{ab3}, as $b_k$ is a function of $x$. The convex DRO formulation in \eqref{forward DRO}, referred to as $\textbf{F-DRO}$, will be the FO problem in this paper. The unknown parameter $\epsilon$ in F-DRO appears only in \eqref{CVaR constraints}, which simplifies the task of parameter recovery through IO, as discussed in \cite{chan2020inverse}.  Specifically, \eqref{CVaR constraints} is referred to as the CVaR constraint because the left-hand side of \eqref{CVaR constraints} quantifies the conditional value-at-risk. The KKT conditions of \eqref{forward DRO} are:
\allowdisplaybreaks%
\begin{subequations} \label{KKT conditions}
    \begin{align}
        & c +A^{\top}\theta {\rm{+}} \sum_{i=1}^{N_s} \sum_{j=1}^{m} \sum_{k=0}^{N_{cc}} (\phi_{ikj}^{1} + \phi_{ikj}^{2}+\phi_{ikj}^{3})\frac{\partial b_{k}}{\partial x} = 0 \label{stationarity start}\\ 
        & \mu - \sum_{i=1}^{N_s} \sum_{j=1}^{m}\sum_{k=1}^{N_{cc}} \left(\phi_{ikj}^{1} + \phi_{ikj}^{2} + \phi_{ikj}^{3}\right) = 0\\
        & \frac{\mu \epsilon}{\gamma} {\rm{-}} \sum_{i=1}^{N_s} \sum_{k=0}^{N_{cc}} \left(\phi_{ikj}^{2} \vert \overline{\xi}_{j} {\rm{-}} \hat{\xi}_{ij}\vert {\rm{+}} \phi_{ikj}^{3} \vert \underline{\xi}_{j} {\rm{-}} \hat{\xi}_{ij}\vert \right) {\rm{-}}\eta_{j} =0 \\ 
        & \frac{\mu }{\gamma N_s} - \sum_{k=0}^{N_{cc}} \left(\phi_{ikj}^{1} + \phi_{ikj}^{2} + \phi_{ikj}^{3}\right) = 0 \label{stationarity end} \\
        & \theta^{\top} (Ax-h) = 0 \label{complementary slackness start} \\ 
        & \mu \left(\tau + \frac{1}{\gamma}\sum\nolimits_{j=1}^{m} \left( \lambda_{j} \epsilon + \frac{1}{N_s} \sum\nolimits_{i=1}^{N_s} s_{ij} \right) \right) = 0 \label{CVaR complementary slackness}\\
        & \phi_{ikj}^{1} \left( a_{kj} \hat{\xi}_{ij} + b_k - s_{ij}\right) = 0 \\
        & \phi_{ikj}^{2} \left( a_{kj} \overline{\xi}_{j} + b_k - \lambda_j \vert \overline{\xi}_{j} - \hat{\xi}_{ij}\vert -s_{ij} \right) = 0 \\
        & \phi_{ikj}^{3} \left( a_{kj} \underline{\xi}_{j} + b_k - \lambda_j \vert \underline{\xi}_{j} - \hat{\xi}_{ij}\vert -s_{ij} \right) = 0 \\
        & \eta_j \lambda_j = 0 \label{complementary slackness end} \\
        & \eqref{primal feasibility start}-\eqref{primal feasibility end} \label{primal feasibility} \\
        & \theta, \mu, \phi_{ikj}^{1}, \phi_{ikj}^{2}, \phi_{ikj}^{3}, \eta_j \geq 0 \label{dual feasibility} \\
        & i = 1,\dots,N_s,\ j = 1,\dots,m, \ k = 0,\dots,N_{cc}, \nonumber
    \end{align}%
\end{subequations}%

where \eqref{stationarity start}-\eqref{stationarity end} are the stationarity constraints, \eqref{complementary slackness start}-\eqref{complementary slackness end} are the complementary slackness, \eqref{primal feasibility} and \eqref{dual feasibility} stand for the primal feasibility and dual feasibility, respectively.

Although the F-DRO model, as detailed in \eqref{forward DRO}, originates from the linear chance-constrained model in \eqref{general LP with CC}, where the uncertainty $\xi$ does not influence the objective value, its convexity is preserved even when the objective function in \eqref{general objective} and the constraint in \eqref{general normal constraint} are convex but not necessarily linear, or when $\xi$ directly affects the objective function. In light of this, we offer the following remark: 
\begin{myremark} \label{other type of FO}
    F-DRO in \eqref{forward DRO} can be extended to address decision-making problems with a risk-averse objective function. Specifically, consider augmenting the objective function in \eqref{general objective} with a term for the worst-case expected cost, expressed as $\sup_{\mathbb{P}\in \mathcal{B}_{\epsilon}(\hat{\mathbb{P}}_{N_s})} \mathbb{E}^{\mathbb{P}} \left[f(x, \xi)\right]$, where $f$ is convex with respect to both $x$ and $\xi$. Following the approach in \cite{mohajerin2018data}, this adaptation can be formulated into a convex F-DRO problem with a similar structure as \eqref{forward DRO}.
\end{myremark}

\section{Inverse DRO Formulation}
\label{sec:Proposed_model}
We consider the following scenario: one party (the decision-maker) has made a decision using F-DRO in \eqref{forward DRO} (or other equivalent models). Another party (the data-miner) observes this decision and aims to infer the conservativeness level of the decision-maker, specifically, the value of $\epsilon$, by employing the inverse optimization method. This section is designed to assist the data-miner in achieving this parameter recovery task through the development of an inverse DRO (\textbf{I-DRO}) model. Fig.~\ref{fig:I-DRO} shows these interactions between the decision-maker and the data-miner, where $\epsilon^{\text{true}}$ is the conservativeness level chosen by the decision-maker in F-DRO, and $\epsilon^*$ is the value of $\epsilon$ recovered by the data-miner through I-DRO.
\begin{figure}[!htbp]
    \centering
    \includegraphics[width=0.9\linewidth]{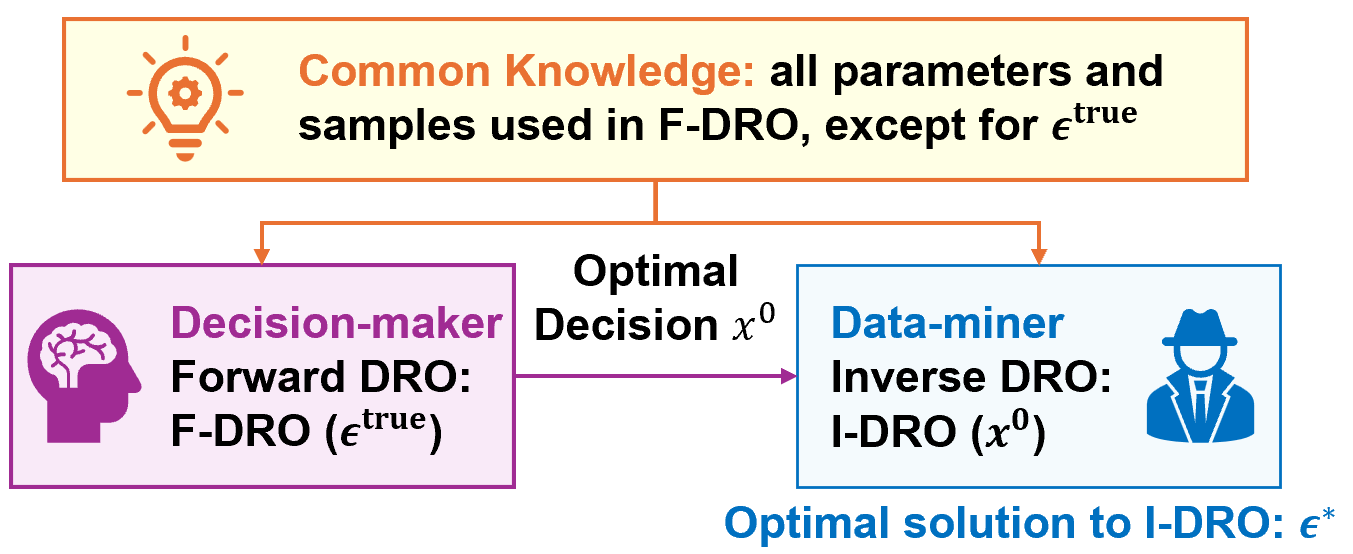}
    \caption{Forward and Inverse DRO Frameworks.}
    \label{fig:I-DRO}
\end{figure}

\subsection{Partially Constrained I-DRO Problem}
\label{subsec:IDRO_model}
Before formulating the I-DRO model, we need to clarify four assumptions: first, we assume the data-miner possesses full knowledge of or can accurately infer all constant parameters within F-DRO in \eqref{forward DRO}, such as $A$, $h$, $B$, $D$, $d$, $c$, and $\gamma$; second, we assume that both parties have access to the same historical dataset of the random variable $\xi$, denoted as $\{\hat{\xi_i}\}_{i=1}^{N_s}$; third, we assume that the data-miner can observe decision $x^0$ made by the decision-maker, and $x^0$ is the exact optimal solution of F-DRO in \eqref{forward DRO}; finally, we assume that the decision-maker selects the parameter in the F-DRO model appropriately so that the CVaR constraint \eqref{CVaR constraints} is binding.

These assumptions hold true for a wide range of practical scenarios. For example, consider the power market clearing process as the F-DRO problem. The parameters in F-DRO are either publicly available to market participants (e.g., power grid topology) or can be inferred from market-clearing results using data-driven approaches (e.g., production costs of generators), see \cite{ruiz2013revealing,chen2017strategic,birge2017inverse,liang2023data}. The historical data on power demand and renewable power generation are promptly and accurately published \cite{NYISO_load}. Additionally, \cite{liang2023data} indicates that the noise in market-clearing results is sparse and traceable, supporting the assumption that the observed market outputs are the optimal solutions of the forward market-clearing model.
Although there is no guarantee that the CVaR constraint will always be binding in practice, the likelihood of it being binding increases with properly tuned chance constraint parameters, as demonstrated in \cite{CCtuningHou2020, hou2022data}.

Despite these assumptions, I-DRO remains a partially constrained inverse problem. This arises from the fact that three groups of primal variables in F-DRO \eqref{forward DRO}, i.e., $\tau$, $\lambda_j$, and $s_{ij}$, are not directly measurable since they are auxiliary variables. Nevertheless, the risk-aware decision $x$ is normally observable. 
With the observed decision vector $x^0$, we can formulate the I-DRO model as:
\begin{subequations}\label{inverse DRO}
    \begin{align}
        \textbf{I-DRO}(x^0): & \quad \min_{\epsilon, p, d} \qquad  \vert \epsilon - \bar{\epsilon} \vert \label{IDRO objective}\\
        \mathrm{s.t.} \qquad & \quad \eqref{stationarity start}-\eqref{dual feasibility} \nonumber \\
        & \quad p \in \hat{\mathcal{P}} \label{IDRO constraint p},
    \end{align}
\end{subequations}
where $p$ and $d$ respectively collect all the primal decisions, including $x,\tau,\{\lambda_j\}_{\forall j},\{s_{ij}\}_{\forall i,j}$, and dual variables, including $\theta, \mu, \{\phi_{ikj}^{1}, \phi_{ikj}^{2}, \phi_{ikj}^{3}\}_{\forall i,j,k}, \{\eta_j\}_{\forall j}$, in F-DRO \eqref{forward DRO}. Moreover, $\hat{\mathcal{P}}$ collects all the possible values of $p$ while enforcing $x=x^0$. 

Unlike $\bar \omega$ in \eqref{FO_kkt:obj} and \eqref{FO_kkt_partial:obj}, which is defined as an initial belief about vector $\omega$, $\bar{\epsilon}$ in \eqref{IDRO objective} is a predefined large positive number that serves as the upper bound for scalar $\epsilon$. This distinction arises from our assumption regarding the binding status of the CVaR constraint \eqref{CVaR constraints} in F-DRO. 
According to our assumption, $\epsilon^{\text{true}}$ renders \eqref{CVaR constraints} binding in F-DRO. In I-DRO, the complimentary slackness constraint \eqref{CVaR complementary slackness} can be satisfied either when \eqref{CVaR constraints} is binding ($\epsilon^* = \epsilon^{\text{true}}$, $\mu>0$) or non-binding ($\epsilon^* < \epsilon^{\text{true}}$, $\mu=0$), so the feasible region of $\epsilon^*$ is $[0, \epsilon^{\text{true}}]$. Thus, employing a large value of $\bar \epsilon$ in the I-DRO objective \eqref{IDRO objective} helps guide $\epsilon^*$ towards its optimal value within its feasible region. This goal could also be achieved by setting the I-DRO objective to $\max_{\epsilon} \epsilon$; however, this approach risks yielding unbounded results from I-DRO. A good choice of $\bar{\epsilon}$ will be discussed in Remark~\ref{epsilon value}.

\subsection{Properties of I-DRO Solutions}
\label{subsec:Properties I-DRO}
I-DRO in \eqref{inverse DRO} is essentially a bi-linear program, despite the presence of multi-linear terms involving the product of three variables ($\mu \epsilon \lambda_j, \forall j$) in \eqref{CVaR complementary slackness}. This is because each multi-linear term can be equivalently replaced by two bi-linear variables. Bi-linear programs can be solved using off-the-shelf solvers such as Gurobi \cite{gurobi}, but they typically do not guarantee the existence of a unique solution. In this subsection, we will explore the properties of the I-DRO solution $\epsilon^*$ and show that I-DRO in \eqref{inverse DRO} exhibits more favorable properties compared to general bi-linear programs.

We first propose and prove the following theorem for the existence and uniqueness of $\epsilon^{*}$:
\begin{myth} \label{existence of the solution} (Existence and Uniqueness of $\epsilon^{*}$) 
    For each observed decision $x^0$, I-DRO in \eqref{inverse DRO} guarantees the existence of a unique optimal solution $\epsilon^{*}$, assuming that $x^0$ is the exact optimal solution of F-DRO in \eqref{forward DRO} and the value of $\bar{\epsilon}$ is set sufficiently large. 
\end{myth}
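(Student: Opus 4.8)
The plan is to establish existence and uniqueness separately, treating $\epsilon^{*}$ as the quantity of interest rather than the full primal-dual vector $(p,d)$, which need not be unique. For \textbf{existence}, I would argue that the feasible set of I-DRO in \eqref{inverse DRO} is nonempty: since $x^{0}$ is by assumption the exact optimal solution of F-DRO$(\epsilon^{\text{true}})$, the KKT conditions \eqref{stationarity start}--\eqref{dual feasibility} are satisfiable at $\epsilon=\epsilon^{\text{true}}$ together with the corresponding optimal primal variables $\tau,\lambda_{j},s_{ij}$ (which lie in $\hat{\mathcal{P}}$ since they are consistent with $x=x^{0}$) and the optimal dual multipliers; F-DRO is convex with affine constraints, so strong duality and the KKT conditions hold. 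Hence $(\epsilon^{\text{true}},p,d)$ is feasible and the feasible set is nonempty. Because the objective $|\epsilon-\bar\epsilon|$ is continuous and, as noted in the text, the feasible values of $\epsilon$ lie in the bounded interval $[0,\epsilon^{\text{true}}]\subseteq[0,\bar\epsilon]$ once $\bar\epsilon$ is chosen large enough, the objective is bounded below by $0$ on a nonempty set; I would then either (i) show the feasible set is closed (it is defined by finitely many polynomial equalities/inequalities) so that the infimum over a closed bounded $\epsilon$-range is attained, or (ii) observe more directly that on $[0,\epsilon^{\text{true}}]$ with $\bar\epsilon\ge\epsilon^{\text{true}}$ the objective $|\epsilon-\bar\epsilon|=\bar\epsilon-\epsilon$ is minimized exactly at $\epsilon=\epsilon^{\text{true}}$, and that point is feasible. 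This simultaneously gives existence and pins down the optimal value.

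For \textbf{uniqueness of $\epsilon^{*}$}, the key is exactly the observation just made: for $\bar\epsilon$ sufficiently large the objective $|\epsilon-\bar\epsilon|$ is strictly decreasing in $\epsilon$ over the relevant range, so minimizing it is equivalent to maximizing $\epsilon$ over the feasible $\epsilon$-values. By the discussion preceding the theorem, the complementary-slackness constraint \eqref{CVaR complementary slackness} forces any feasible $\epsilon$ to satisfy $\epsilon\le\epsilon^{\text{true}}$ (either the CVaR constraint \eqref{CVaR constraints} is binding, which by the assumption on $\epsilon^{\text{true}}$ can only happen at $\epsilon=\epsilon^{\text{true}}$, or $\mu=0$, which corresponds to $\epsilon<\epsilon^{\text{true}}$). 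Thus the feasible $\epsilon$-set is contained in $[0,\epsilon^{\text{true}}]$ and contains $\epsilon^{\text{true}}$, so its maximum — hence the unique minimizer of the strictly monotone objective — is $\epsilon^{*}=\epsilon^{\text{true}}$. I would spell out why $\epsilon=\epsilon^{\text{true}}$ is the \emph{only} value at which \eqref{CVaR constraints} can bind given $x=x^{0}$: with $x$ fixed at $x^{0}$ and the auxiliary variables optimized, the left-hand side of \eqref{CVaR constraints} is affine and strictly increasing in $\epsilon$ (coefficient $\tfrac{1}{\gamma}\sum_j\lambda_j\ge 0$, and strictly positive in the nondegenerate case), so it can equal zero at most once.

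The \textbf{main obstacle} I anticipate is the degenerate possibility that $\sum_j \lambda_j = 0$ at the optimal solution, which would make the $\epsilon$-coefficient in \eqref{CVaR constraints} vanish and break the "strictly increasing in $\epsilon$" argument, potentially allowing a whole interval of $\epsilon$ to be feasible; handling this requires either showing $\lambda_j>0$ for some $j$ under the stated assumptions (the CVaR constraint being binding with a nontrivial Wasserstein radius should force some $\lambda_j>0$), or arguing that even in this degenerate case the complementary-slackness and stationarity structure still forces $\mu$ and the $\phi$-multipliers to be consistent only with $\epsilon=\epsilon^{\text{true}}$. A secondary technical point is making precise how large "sufficiently large" $\bar\epsilon$ must be — concretely $\bar\epsilon\ge\epsilon^{\text{true}}$ suffices for the monotonicity argument — and noting that since $\epsilon^{\text{true}}$ is unknown to the data-miner this is exactly why a conservative large constant is used, to be revisited in Remark~\ref{epsilon value}. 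I would close by remarking that uniqueness is claimed only for the scalar $\epsilon^{*}$, not for the auxiliary primal or dual variables, which may well be non-unique without affecting the statement.
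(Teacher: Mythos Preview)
Your existence argument is essentially the paper's: since $x^{0}$ is optimal for the convex program F-DRO$(\epsilon^{\text{true}})$, the KKT system \eqref{stationarity start}--\eqref{dual feasibility} is satisfiable at $\epsilon=\epsilon^{\text{true}}$ with the associated primal and dual optima, and \eqref{IDRO constraint p} only fixes the $x$-block of $p$, so the I-DRO feasible set is nonempty. That is exactly what the paper does, and your added remarks about closedness and attainment are a welcome tightening.

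For uniqueness, however, you do considerably more than the theorem asks, and in doing so you drift into Theorem~\ref{correctness of the solution}. The paper's proof of uniqueness is a one-line observation: $\epsilon$ is a bounded scalar and $|\epsilon-\bar\epsilon|$ is strictly monotone on the relevant range once $\bar\epsilon$ is large, so the minimizer over the feasible $\epsilon$-values is unique. It does \emph{not} attempt to identify that minimizer. Your argument goes further and claims the feasible $\epsilon$-set is exactly $[0,\epsilon^{\text{true}}]$, whence $\epsilon^{*}=\epsilon^{\text{true}}$. That stronger conclusion is precisely the ``correctness'' statement of Theorem~\ref{correctness of the solution}, and it requires \emph{both} of that theorem's hypotheses: the CVaR constraint binding (which you invoke via the standing fourth assumption) \emph{and} $\epsilon^{\text{true}}<\epsilon^{\max}$, which you never use. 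Without condition~(i), the paper itself observes (in the paragraph following Theorem~\ref{correctness of the solution}) that every $\epsilon>\epsilon^{\max}$ becomes feasible and $\epsilon^{*}=\bar\epsilon$, so your containment ``feasible $\epsilon\subseteq[0,\epsilon^{\text{true}}]$'' can fail under Theorem~\ref{existence of the solution}'s hypotheses alone. Relatedly, the degeneracy you flag ($\sum_j\lambda_j=0$) is a genuine obstruction to your strict-increase-in-$\epsilon$ step, but it is irrelevant to Theorem~\ref{existence of the solution}: uniqueness of $\epsilon^{*}$ follows from monotonicity of the objective regardless of whether one can pin down which point of the feasible set it selects.

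In short: your existence proof is correct and matches the paper; your uniqueness proof is correct in its conclusion but over-engineered, and the extra content you derive ($\epsilon^{*}=\epsilon^{\text{true}}$) is not supported by Theorem~\ref{existence of the solution}'s hypotheses alone---save that material, together with a treatment of the $\lambda_j=0$ degeneracy and the role of $\epsilon^{\max}$, for Theorem~\ref{correctness of the solution}.
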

\begin{proof}
    We first prove the existence of $\epsilon^{*}$. Given that F-DRO in \eqref{forward DRO} is convex, the KKT conditions in \eqref{KKT conditions} are necessary for optimality. Assuming the observed decision $x^0$ to be the exact optimal solution of F-DRO in \eqref{forward DRO} indicates that the KKT conditions in \eqref{KKT conditions} are satisfied for at least one set of primal and dual variables, denoted as $p^*$ and $d^*$, for a particular $\epsilon$. 
    Hence, there exists at least one $\epsilon$ that meets all the constraints in \eqref{inverse DRO}, even when we require $p=p^*$ and $d=d^*$, provided that the upper limit $\bar{\epsilon}$ is sufficiently large. 
    Specifically, \eqref{IDRO constraint p} prescribes values for only a subset of elements in vector $p$ and imposes no constraints on vector $d$, making it less restrictive than $p=p^*$ and $d=d^*$.

    Proving the uniqueness of $\epsilon^{*}$ is straightforward. Given that $\epsilon$ is a bounded scalar, the loss function \eqref{IDRO objective} is monotone, thereby rendering $\epsilon^{*}$ unique.
\end{proof}

However, the fact that $\epsilon^{*}$ is unique does not ensure that it precisely matches $\epsilon^{\text{true}}$ in F-DRO, i.e., the correctness of $\epsilon^{*}$ is not guaranteed.
Next, we will explore the conditions for $\epsilon^{*}$ to be correct. We first introduce a critical value $\epsilon^{\max}$:
\begin{mydef} 
Consider a random variable $\xi$ within domain $\Xi$, where $\overline{\xi}$ and $\underline{\xi}$ denote the upper and lower bounds of $\Xi$, respectively. We define:
\begin{equation} \label{definition of epsilon_max}
    \epsilon^{\max} = \max\left[ d_{W}(\hat{\mathbb{P}}_{N_s}, \delta_{\overline{\xi}}), d_{W}(\hat{\mathbb{P}}_{N_s}, \delta_{\underline{\xi}}) \right],
\end{equation}
where $\hat{\mathbb{P}}_{N_s}$ is the empirical distribution of $\xi$ constructed following \eqref{empirical distribution}, $\delta_{\overline{\xi}}$ and $\delta_{\underline{\xi}}$ are two Dirac distributions centered at $\overline{\xi}$ and $\underline{\xi}$, respectively. 
\end{mydef} 

For any $\mathbb{Q} \in \mathcal{M}(\Xi)$, it holds that $d_{W}(\hat{\mathbb{P}}_{N_s}, \mathbb{Q}) \leq \epsilon^{\max}$. This suggests that every possible distribution within $\mathcal{M}(\Xi)$ falls into the Wasserstein ball centered at $\hat{\mathbb{P}}_{N_s}$ with a radius of $\epsilon^{\max}$, i.e., $\mathcal{M}(\Xi) \subseteq \mathcal{B}_{\epsilon^{\max}}(\hat{\mathbb{P}}_{N_s})$. 

In the following theorem, we establish the necessary and sufficient conditions for $\epsilon^{*}$ to precisely match $\epsilon^{\text{true}}$: 
\begin{myth} \label{correctness of the solution}
    (Correctness of $\epsilon^{*}$) Assuming $\bar \epsilon$ in \eqref{IDRO objective} is chosen to exceed $\epsilon^{\max}$, the value of $\epsilon^{*}$ derived from I-DRO in \eqref{inverse DRO} matches the value of $\epsilon^{\text{true}}$ used in F-DRO in \eqref{forward DRO} if and only if both of the following conditions are met: (i) $\epsilon^{\text{true}} <\epsilon^{\max}$, and (ii) the CVaR constraint (11c) is binding during the forward decision-making process, i.e., $\tau^{*} + \frac{1}{\gamma}\sum_{j=1}^{m} (\lambda_{j}^{*} \epsilon^{\text{true}} + \frac{1}{N_s} \sum_{i=1}^{N_s} s_{ij}^{*}) = 0$, where $\tau^{*}, \lambda_{j}^{*}, s_{ij}^{*}$ are the optimal solutions to F-DRO in (11).
\end{myth}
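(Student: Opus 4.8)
The plan is to prove the two directions of the equivalence separately, using throughout the following bridge. For every fixed $\epsilon$, \textbf{F-DRO}$(\epsilon)$ in \eqref{forward DRO} is a linear program in $(x,\tau,\lambda,s)$, so its KKT system \eqref{KKT conditions} is both necessary and sufficient for optimality with no constraint qualification; hence a radius $\epsilon\ge 0$ is feasible for I-DRO in \eqref{inverse DRO} --- where \eqref{IDRO constraint p} pins $x=x^0$ --- precisely when $x^0\in\mathcal{X}^{\text{opt}}(\textbf{F-DRO}(\epsilon))$. Since the loss \eqref{IDRO objective} is minimized by pushing $\epsilon$ toward the large value $\bar\epsilon>\epsilon^{\max}$, the returned $\epsilon^{*}$ is the \emph{largest} I-DRO-feasible radius. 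I will also use two monotonicity facts about the parametric family $\textbf{F-DRO}(\epsilon)$: writing $\mathrm{CVaR}_\epsilon(x)$ for the optimized left-hand side of \eqref{CVaR constraints}, the sets $\mathcal{X}(\epsilon)=\{x:Ax\le h,\ \mathrm{CVaR}_\epsilon(x)\le 0\}$ are nested decreasing and the optimal value $V(\epsilon)=\min\{c^\top x:x\in\mathcal{X}(\epsilon)\}$ is nondecreasing in $\epsilon$; and, since $\mathcal{M}(\Xi)\subseteq\mathcal{B}_{\epsilon^{\max}}(\hat{\mathbb{P}}_{N_s})$, the ambiguity set --- hence the whole program --- is constant for all $\epsilon\ge\epsilon^{\max}$. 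The sharp ingredient is that $\epsilon\mapsto\mathrm{CVaR}_\epsilon(x^0)$ is \emph{strictly} increasing on $[0,\epsilon^{\max})$.

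\emph{Sufficiency.} Assume (i) $\epsilon^{\text{true}}<\epsilon^{\max}$ and (ii) $\mathrm{CVaR}_{\epsilon^{\text{true}}}(x^0)=0$. Take any $\epsilon'>\epsilon^{\text{true}}$. If $\epsilon'<\epsilon^{\max}$, strict monotonicity gives $\mathrm{CVaR}_{\epsilon'}(x^0)>\mathrm{CVaR}_{\epsilon^{\text{true}}}(x^0)=0$; if $\epsilon'\ge\epsilon^{\max}$, then $\mathrm{CVaR}_{\epsilon'}(x^0)=\mathrm{CVaR}_{\epsilon^{\max}}(x^0)\ge\mathrm{CVaR}_{\epsilon''}(x^0)>0$ for any $\epsilon''\in(\epsilon^{\text{true}},\epsilon^{\max})$. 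Either way $x^0\notin\mathcal{X}(\epsilon')$, so $x^0$ is not feasible --- a fortiori not optimal --- for $\textbf{F-DRO}(\epsilon')$, and $\epsilon'$ is I-DRO-infeasible. Thus $\epsilon^{\text{true}}$ is the largest I-DRO-feasible radius (it is feasible by Theorem~\ref{existence of the solution}), and since $\bar\epsilon>\epsilon^{\max}>\epsilon^{\text{true}}$ the loss \eqref{IDRO objective} is minimized at $\epsilon^{*}=\epsilon^{\text{true}}$.

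\emph{Necessity.} I argue the contrapositive. If (i) fails, $\epsilon^{\text{true}}\ge\epsilon^{\max}$, and the second monotonicity fact makes $\textbf{F-DRO}(\epsilon)$ coincide with $\textbf{F-DRO}(\epsilon^{\max})$ for every $\epsilon\ge\epsilon^{\max}$, of which $x^0$ is an optimizer; hence every such $\epsilon$ is I-DRO-feasible, the feasible set is unbounded above, and the loss drives $\epsilon^{*}$ to $\bar\epsilon\ne\epsilon^{\text{true}}$. If (ii) fails, $\mathrm{CVaR}_{\epsilon^{\text{true}}}(x^0)<0$: then \eqref{CVaR complementary slackness} forces $\mu=0$, \eqref{stationarity start}--\eqref{stationarity end} then force all $\phi^{1}_{ikj},\phi^{2}_{ikj},\phi^{3}_{ikj},\eta_j$ to vanish and $c+A^\top\theta=0$, so $x^0$ solves $\min\{c^\top x:Ax\le h\}$. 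By continuity of $\epsilon\mapsto\mathrm{CVaR}_\epsilon(x^0)$ the chance constraint stays slack on an interval $(\epsilon^{\text{true}},\epsilon^{\text{true}}+\delta)$, on which $x^0$ is still feasible and hence still optimal for $\textbf{F-DRO}(\epsilon)$; so some feasible radius exceeds $\epsilon^{\text{true}}$ and $\epsilon^{*}>\epsilon^{\text{true}}$.

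The main obstacle is the strict monotonicity of $\epsilon\mapsto\mathrm{CVaR}_\epsilon(x^0)$ on $[0,\epsilon^{\max})$: if this function had a plateau on some sub-interval below $\epsilon^{\max}$ --- a ``saturated'' worst-case distribution concentrated on $\partial\Xi$ --- then $x^0$ could reproduce that plateau for a range of radii and $\epsilon^{*}$ would overshoot $\epsilon^{\text{true}}$ even under (i)--(ii). I would derive it from the Wasserstein dual used to obtain \eqref{DRO convex reformulation}: its optimal value is concave and piecewise-linear in $\epsilon$ with right-derivative equal to the optimal $\sum_j\lambda_j$, and the definition \eqref{definition of epsilon_max} of $\epsilon^{\max}$ is exactly the radius at which this multiplier first hits zero, i.e., at which $\mathcal{B}_\epsilon(\hat{\mathbb{P}}_{N_s})$ first exhausts $\mathcal{M}(\Xi)$. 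A minor point to check is that the contrapositive argument uses only feasibility/optimality of the observed $x^0$, so non-uniqueness of the forward optimizer does not affect it.
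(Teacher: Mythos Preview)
Your proof follows the same architecture as the paper's: necessity by contrapositive (violating (i) leaves $x^0$ optimal for all $\epsilon\ge\epsilon^{\max}$; violating (ii) leaves the CVaR constraint slack so $x^0$ remains optimal on a right-neighborhood of $\epsilon^{\text{true}}$), and sufficiency by identifying $\epsilon^{\text{true}}$ as the largest I-DRO-feasible radius. Where you differ is in rigor rather than route. The paper's sufficiency argument only remarks that the ambiguity set ``changes with variations in $\epsilon^{\text{true}}$'' and that every $\epsilon^{-}\le\epsilon^{\text{true}}$ is feasible, then concludes $\epsilon^{*}=\epsilon^{\text{true}}$ without explicitly ruling out larger radii; your explicit isolation of the \emph{strict} monotonicity of $\epsilon\mapsto\mathrm{CVaR}_\epsilon(x^0)$ on $[0,\epsilon^{\max})$ is exactly the missing ingredient, and your candid flag that this is the crux---together with the dual-based sketch (right-derivative equals the optimal $\sum_j\lambda_j$, which first vanishes at $\epsilon^{\max}$)---is the right way to close it. Likewise, for necessity of (ii) the paper simply asserts that a slightly larger $\epsilon^{+}$ yields the same forward optimum, whereas your KKT chain ($\mu=0\Rightarrow$ all $\phi,\eta=0\Rightarrow c+A^\top\theta=0\Rightarrow x^0$ solves $\min\{c^\top x:Ax\le h\}$) actually justifies why $x^0$ stays optimal on an interval. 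In short: same strategy, but you have surfaced and addressed the one genuinely nontrivial step that the paper treats informally.
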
 
\begin{proof}
We first prove that $\epsilon^{\text{true}} < \epsilon^{\max}$ is a necessary condition for $\epsilon^{*} = \epsilon^{\text{true}}$. It is equivalent to show that $\epsilon^{\text{true}} \geq \epsilon^{\max}$ leads to $\epsilon^{*} \neq \epsilon^{\text{true}}$. Given that $\mathcal{M}(\Xi) \subseteq \mathcal{B}_{\epsilon^{\max}}(\hat{\mathbb{P}}_{N_s})$, for any $\epsilon^{\text{true}} \geq \epsilon^{\max}$, the optimal solution to F-DRO in  \eqref{forward DRO} remains unaffected by variations in $\epsilon^{\text{true}}$, leading to an unchanged input $x_0$ for I-DRO in \eqref{inverse DRO}. Consequently, the value of $\epsilon^{*}$ stays consistent for any $\epsilon^{\text{true}} \geq \epsilon^{\max}$, indicating that $\epsilon^{*} \neq \epsilon^{\text{true}}$.

Similarly, we can prove that the binding state of \eqref{CVaR constraints} is necessary for $\epsilon^{*} = \epsilon^{\text{true}}$. If \eqref{CVaR constraints} is not binding at $\epsilon^{\text{true}}$, there exists an $\epsilon^+ > \epsilon^{\text{true}}$ that also satisfies \eqref{CVaR constraints}. Therefore, both $\epsilon^+$ and $\epsilon^{\text{true}}$ will yield an identical optimal solution to F-DRO in \eqref{forward DRO}, and they are both feasible solutions to I-DRO in \eqref{inverse DRO}. Given that $\bar {\epsilon} > \epsilon^+ > \epsilon^{\text{true}}$, 
I-DRO in \eqref{inverse DRO} will choose $\epsilon^* = \epsilon^+$ to minimize the objective, which means $\epsilon^{*} \neq \epsilon^{\text{true}}$.

Then, we prove that $\epsilon^{\text{true}} < \epsilon^{\max}$ combined with \eqref{CVaR constraints} being binding is sufficient for $\epsilon^{*} = \epsilon^{\text{true}}$. 
When $\epsilon^{\text{true}} < \epsilon^{\max}$, $\mathcal{B}_{\epsilon^{\text{true}}}(\hat{\mathbb{P}}_{N_s})$ is a subset of $\mathcal{M}(\Xi)$ and changes with variations in $\epsilon^{\text{true}}$.
Moreover, with \eqref{CVaR constraints} binding at $\epsilon^{\text{true}}$, it indicates that any $\epsilon^- \leq \epsilon^{\text{true}}$ would render \eqref{CVaR constraints} feasible, making it a feasible solution to I-DRO in  \eqref{inverse DRO}. Since $\bar {\epsilon} > \epsilon^{\text{true}} \geq \epsilon^{-}$, the optimal solution to \eqref{inverse DRO} will be $\epsilon^* = \epsilon^{\text{true}}$.\end{proof}

Condition (i) in Theorem~\ref{correctness of the solution} can also be interpreted through a decision making lens. A decision-maker with $\epsilon^{\text{true}} \geq \epsilon^{\max}$ exhibits a highly conservative risk attitude by relying solely on the bounds of $\Xi$, transforming F-DRO in \eqref{forward DRO} into a robust optimization, possibly due to a belief that historical samples $\{\hat{\xi}_{i}\}_{i=1}^{N_s}$ do not provide sufficient insights beyond the bounds $\overline{\xi}$ and $\underline{\xi}$. This suggests that less representative samples can result in overly conservative decisions, thereby impacting the successful recovery of $\epsilon$.

Moreover, condition (ii) in Theorem~\ref{correctness of the solution} aligns with findings from other studies that investigate the recovery of constraint parameters via inverse optimization, such as \cite{chan2020inverse}. A constraint must be binding to influence the optimization outcomes, so the unknown parameter within a constraint can only be precisely recovered when the observed decision occurs while this constraint is binding.

The two conditions in Theorem~\ref{correctness of the solution} reflect the decision-maker's perspective, but the data-miner who employs I-DRO may lack the prior knowledge needed to verify these conditions. Therefore, it is crucial to allow users of I-DRO to assess the correctness of I-DRO results without knowing if both conditions have been met.
If condition (i) is not met, i.e., if $\epsilon^{\text{true}}> \epsilon^{\max}$, then any $\epsilon > \epsilon^{\max}$ becomes a feasible solution to I-DRO, leading to $\epsilon^* = \bar \epsilon$. Let $\hat \epsilon$ denote the value of $\epsilon$ that renders \eqref{CVaR constraints} binding in F-DRO. If Condition (ii) is not met, i.e., $\epsilon^{\text{true}}< \hat \epsilon$, the potential I-DRO result needs to be discussed in two cases. If $\hat \epsilon \le \bar \epsilon$, then $\hat \epsilon$ is the largest $\epsilon$ satisfying I-DRO constraints, leading to  $\epsilon^* = \hat \epsilon$. If $\hat \epsilon > \bar \epsilon$, then $\bar \epsilon$ falls within the feasible region of I-DRO, leading to  $\epsilon^* = \bar \epsilon$. In summary, the result of $\epsilon^* = \bar \epsilon$ typically suggests that one of the two conditions in Theorem~\ref{correctness of the solution} is not met, indicating a failure of the I-DRO process. Therefore, we suggest the following rule for choosing $\bar{\epsilon}$:
\begin{myremark} \label{epsilon value}
The upper limit $\bar{\epsilon}$ in \eqref{IDRO objective} should be set such that $\bar{\epsilon} \geq \epsilon^{\max}$. Given that $\epsilon^{\max}$ can be calculated from historical samples of $\xi$ and the data-miner implementing I-DRO in \eqref{inverse DRO} has access to these samples, setting $\bar{\epsilon}$ in this manner is practical. It is also advisable to assign a specific value to $\bar \epsilon$ as $\epsilon^* = \bar \epsilon$ is an indicator of potential I-DRO failure.
\end{myremark} 

\subsection{Extensions of I-DRO Model}
\label{subsec:IDRO_extension}
We do not want the strong assumptions made at the beginning of Section~\ref{subsec:IDRO_model} to restrict the application scenarios of I-DRO. In fact, with some straightforward modifications to the I-DRO framework, it is possible to relax these assumptions. 

Regarding the first assumption, we have the following remark discussing I-DRO with extended objective:
\begin{myremark} \label{data-driven}
   When parameters other than $\epsilon$ in F-DRO are unknown, the I-DRO model presented in \eqref{inverse DRO} can be adapted by treating these unknown parameters as decision variables instead of constants. For instance, if $c$ is unknown, the revised objective function for I-DRO would be $\min_{\epsilon, c, p, d} \ \vert \epsilon - \bar \epsilon \vert + w \Vert c - \bar c \Vert$. Here, $\bar c$ represents a predefined parameter that reflects the initial estimate of $c$, and $w$ is a weighting factor that balances the two parts of the objective function.
\end{myremark}

Regarding the third assumption, we have the following remark discussing I-DRO in noisy settings:
\begin{myremark} \label{noisy situation}
   We can modify I-DRO in \eqref{inverse DRO} if $x^0$ is feasible but not necessarily optimal for F-DRO in \eqref{forward DRO}. The modifications include: i) introducing slack variables to relax the KKT conditions in \eqref{stationarity start} - \eqref{complementary slackness end}, and ii) incorporating the absolute values of these slack variables into the objective function in \eqref{IDRO objective} to account for the deviations from the optimum \cite{aswani2018inverse}. 
\end{myremark}

Instead of observing a single decision, we typically monitor multiple decisions made by the same decision-maker over time and under various circumstances. This enables a data-driven I-DRO approach, similar to the data-driven IO model proposed in \cite{mohajerin2018dataio}. In this case, the fourth assumption, also the second necessary and sufficient condition in Theorem~\ref{correctness of the solution}, can be relaxed:
\begin{myremark} \label{non-binding}
   In the data-driven I-DRO approach, it is sufficient for the CVaR constraint \eqref{CVaR constraints} to be binding at some point during a given period. Let $\epsilon^*_t$ represent the conservativeness level recovered based on observations at time $t$. If $\epsilon^*_t$ varies with changes in $t$, while $\epsilon^{\text{true}}$ remains constant, this indicates that at least one of the conditions in Theorem~\ref{correctness of the solution} is not constantly met. In such scenarios, the smallest value among all the I-DRO results $\epsilon^*_t$ should be chosen as the final recovered conservativeness level.
\end{myremark}

We also want to highlight that the I-DRO model is adaptable to scenarios where the forward decision-making model is not specifically a DRO model, but shares similar properties with DRO, such as aiming to mitigate the risk of worst-case losses. In this case, the F-DRO model in \eqref{forward DRO} can serve as a surrogate for the decision-making process, and the recovered value $\epsilon^*$ is no longer the Wasserstein metric. Nevertheless, the relative value of $\epsilon^*$ reflects the conservativeness level of a decision-maker. The larger the $\epsilon$, the more conservative the decision.

In summary, three extreme scenarios could lead to incorrect estimations of $\epsilon$ through I-DRO in \eqref{inverse DRO}. These scenarios include: i) using an unsuitable $\epsilon$ in F-DRO, ii) the chance constraints in F-DRO not being binding, and iii) the historical samples used in F-DRO lacking representativeness, e.g., when the sample size is too small. In the next section, we will demonstrate I-DRO performance in both normal and extreme scenarios through numerical experiments.

\section{Numerical Experiments}
\label{sec:Case_study}
In this section, we formulate a chance-constrained DC optimal power flow (DC-OPF) model, routinely used by power system operators to schedule their operations, and identify it as F-DRO. We demonstrate how the conservativeness of system operators can be recovered through I-DRO. We perform computations on an illustrative IEEE 5-bus system using synthetic data to assess I-DRO robustness across the extreme scenarios described above and a realistic New York Independent System Operator (NYISO) 11-zone system to show its computational efficiency and practical value. All the numerical experiments are conducted using the Gurobi solver \cite{gurobi} in Python and are performed on a PC with an Intel Core i9, 2.20 GHz with 16 GB RAM.

\subsection{Chance-constrained DC-OPF model}
In power system scheduling, uncertainty primarily stems from forecast errors in net load (i.e., demand minus renewable energy injections), potentially resulting in constraint violations. To mitigate these violations, the chance-constrained DC-OPF model has been proposed \cite{bienstock2014chance}.

We denote $N_g$, $N_l$, and $N_b$ as the number of generators, transmission lines, and buses in the system, respectively. Vectors $c, x, r \in \mathbb{R}^{N_g}$ are the generation cost, output power, and reserved capacity of generators, respectively, $e \in \mathbb{R}^{N_b}$ collects the net load at each bus, $R$ is the overall reserve requirement set by system operators, $\Phi$ is the power transfer distribution factor (PTDF) matrix, and $S$ is the indicator matrix that relates generators to buses, $\bm{1}_{n}$ is an all-one vector of length $n$. Accordingly, the chance-constrained DC-OPF model is formulated as: 
\begin{subequations} \label{normal DCOPF}
    \begin{align} 
        \min_{x, r} \ &  c^{\top} x \label{normal DCOPF cost function}\\
        \mathrm{s.t.} \ & \bm{1}_{N_g}^{\top} x = \bm{1}_{N_b}^{\top} e \label{normal DCOPF power balance}\\
         &  x^{\min} \leq x+r \leq x^{\max}\label{normal DCOPF gen lb}\\
         & \bm{1}_{N_g}^{\top} r \geq R \label{normal DCOPF reserve}\\
         & \inf_{\mathbb{P} \in \mathcal{B}_{\epsilon}} \text{Pr}\left\{
            \begin{array}{c}
                 {\rm{-}}\Phi (Sx{\rm{-}}e) \leq f^{\max}{\rm{+}}\Delta f\\
                  \Phi (Sx{\rm{-}}e) \leq f^{\max} {\rm{+}} \Delta f 
            \end{array}
         \right\} \geq 1{\rm{-}}\gamma \label{cc of power flow} \\
         &  x, r \geq 0 , \label{normal DCOPF positive}
    \end{align}
\end{subequations}
where \eqref{normal DCOPF cost function} minimizes the total generation cost, \eqref{normal DCOPF power balance} enforces the power balance, \eqref{normal DCOPF gen lb} ensures that the power output and scheduled reserve of generators are within the bounds $x^{\min}$ and $x^{\max}$, \eqref{normal DCOPF reserve} enforces the total reserve requirement in the system. The chance constraint in \eqref{cc of power flow} guarantees the worst-case violation rate of the power flow constraint is less than $\gamma$, where $f^{\max}$ is the vector of maximum power flow limit, and $\Delta f$ is a random vector. Eq.~\eqref{normal DCOPF} may differ from chance-constrained DC-OPF models that account for the net load uncertainty $\Delta e$ as a random vector \cite{mieth2020risk}. However, these formulations are equivalent since $\Delta e$ can be converted into the uncertainty in power flow as $\Delta f = -\Phi \Delta e$. The the possible distributions of $\Delta f$ satisfy $\mathbb{P} \in \mathcal{B}_{\epsilon} (\hat{\mathbb{P}}_{N_s})$, where $\{\Delta f_{i}\}_{i=1}^{N_s}$ are the historical samples of $\Delta f$. We set $\gamma = 0.05$ and $R = 0.05 e$ in the case study. 

\subsection{Illustrative Example: IEEE 5-bus System}
We first apply the I-DRO method to the IEEE 5-bus system adapted from Pandapower \cite{pandapower2018}.
We generate $N_s =100$ random samples of $\Delta f$ based on a uniform distribution around $f^{\max}$ with a radius of $0.1 f^{\max}$. Using these samples, we determine $\epsilon^{\max}= 0.4036$ as per \eqref{definition of epsilon_max}. 
In line with Remark~\ref{epsilon value}, we set $\bar{\epsilon}= 100$, i.e., much greater than $\epsilon^{\max}$.

Next, we will consider three extreme scenarios that could potentially challenge the effectiveness of I-DRO:
\subsubsection{Scenario 1: Overly Conservative Decision-Making}
We first evaluate the correctness of recovered parameter $\epsilon^{*}$ through I-DRO, against varying levels of decision conservativeness $\epsilon^{\text{true}}$ in F-DRO. The results in Table~\ref{recovered results with different epi} indicate that when $\epsilon^{\text{true}} < \epsilon^{\max}$, the recovered $\epsilon^{*}$ matches $ \epsilon^{\text{true}}$, highlighting I-DRO effectiveness. Conversely, if $\epsilon^{\text{true}} \geq \epsilon^{\max}$, the first necessary condition in Theorem~\ref{correctness of the solution} is violated, so the recovered $\epsilon^{*} = \bar{\epsilon}$ and does not match $\epsilon^{\text{true}}$.

\subsubsection{Scenario 2: Chance Constraints Not Binding}
Next, we explore the scenario when the second condition in Theorem~\ref{correctness of the solution} does not hold. We increase the line capacity limit $f^{\max}$ to different levels so that violating the chance constraint \eqref{cc of power flow} in F-DRO is nearly impossible, given the distribution of $\Delta f$ is within $\mathcal{B}_{\epsilon^{\text{true}}} (\hat{\mathbb{P}}_{N_s})$. We keep $\epsilon^{\text{true}}=0.01$ and $N_s=100$ in this case. The results in Table~\ref{results with different f_max} show that I-DRO fails when $f^{\max}$ is tripled relative to its original value.

\subsubsection{Scenario 3: Historical Samples Not Representative}
We further examine the case when the samples are scarce and inadequate to represent the distribution of $\Delta f$. In this scenario, we set $\epsilon^{\text{true}}= 0.01$ and maintain $f^{\max}$ at its original value. The results in Table~\ref{recovered results with different sample sizes} demonstrate that the I-DRO approach fails when the sample size is 15 or fewer.

\begin{table}[!t]
    \centering
    \renewcommand{\arraystretch}{1.2}
    \caption{I-DRO results under different $\epsilon^{\text{true}}$}
    \begin{tabular}[width=0.8\textwidth]{c|ccccccc}
       \hline
       $\epsilon^{\text{true}}$  & 0 & 0.01 & 0.05 & 0.1 & 0.2 & $\epsilon^{\max}$  & 0.5 \\ 
       \hline
       $\epsilon^{*}$ & 0 & 0.01 & 0.05 & 0.1 & 0.2 & 100 & 100 \\
       \hline
    \end{tabular}
    \label{recovered results with different epi}
\end{table}

\begin{table}[!t]
    \centering
    \renewcommand{\arraystretch}{1.2}
    \caption{I-DRO results under different $f^{\max}$}
    \begin{tabular}{c|ccccc}
        \hline
         $f^{\max}$ & $\times 0.9$ & $\times 1$ & $\times 1.1$ & $\times 3$ & $\times 5$\\
         \hline
         $\epsilon^{*}$ & 0.01 & 0.01 & 0.01 & 100 & 100 \\
        \hline
    \end{tabular}
    \label{results with different f_max}
\end{table}

\begin{table}[!t]
    \centering
    \renewcommand{\arraystretch}{1.2}
    \caption{I-DRO results under different sample size $N_s$} 
    \begin{tabular}{c|cccccc}
        \hline
         $N_s$&  100 & 75 & 50 & 25 & 15 & 10\\
         \hline
         $\epsilon^{*}$ & 0.01 & 0.01 & 0.01 & 0.01 & 100 & 100\\
         \hline
    \end{tabular}
    \label{recovered results with different sample sizes}
\end{table}

\subsection{Tests in NYISO 11-Zone System}
While we identify three scenarios where I-DRO could fail, they are rare in practice. Decision-makers usually gather ample historical data before using F-DRO, and sufficient data availability allows for a reasonable level of conservativeness. Thus, the chance of facing the first and third extreme scenarios in real-world problems is low. However, the second extreme scenario may arise in practical settings, and we examine it in a realistic NYISO system with 11 nodes, 13 lines, and 335 generators. The parameters of this system are estimated from publicly available databases \cite{NYISO_Gold_Book}, and the samples of $\Delta f$ are derived based on actual data  \cite{NYISO_load}. 

We applied I-DRO to the data on August 2023 and January 2023, which represent the peak and off-peak load conditions within a year, respectively. For January, we set $\epsilon^{\text{true}}=1$. For August, to accommodate the increased likelihood of line capacity limits being violated, we adjusted it to a less conservative value of $\epsilon^{\text{true}}=0.5$ to ensure system feasibility. I-DRO is applied hourly for each day, employing 100 historical samples that match the specific hour in the respective months. This process results in a total of 24 runs for each day. The I-DRO results indicate that the recovered $\epsilon^{*}$ matches $\epsilon^{\text{true}}$ in all instances, confirming the absence of the extreme scenarios in the real system. Moreover, we summarize the computation times of I-DRO across the 48 runs in Table \ref{results in NYISO}. The average computation time is under 400 seconds, demonstrating the efficiency and scalability of our approach.
\begin{table}[!t]
    \centering
    \renewcommand{\arraystretch}{1.2}
    \caption{Computing times of I-DRO in NYISO system }
    \label{results in NYISO}
    \begin{tabular}{cccc}
    \hline
         Case & Average time (s) & Maximum time (s) & Variations (s)\\
        \hline
        January & 398.6 & 2863.2 & 188.3 \\
        August & 400.8 & 1437.0 & 297.0 \\
         \hline
    \end{tabular} 
\end{table}

\section{Conclusion and Future Work}
\label{sec:conclusion}
This paper presents an I-DRO approach for recovering the conservativeness level of decision-makers from optimal decisions made through F-DRO. We cast the I-DRO problem as a bi-linear program, ensuring it can be solved using conventional solvers, while also providing theoretical guarantees regarding its performance.
Numerical experiments on the IEEE 5-bus system not only show the effectiveness of I-DRO under normal parameter settings but also highlight three extreme scenarios that might lead to I-DRO failures. Nevertheless, tests on the NYISO system show these extreme scenarios to be infrequent in real-world problems. The expedited computation times further highlight the I-DRO efficiency and adaptability for broader applications. 

A limitation of I-DRO is its reliance on the assumption of complete knowledge about all parameters and samples used in F-DRO, as well as assumptions concerning the accuracy of observations and the binding status of the CVaR constraint. While these assumptions are reasonable for power system applications, they may not be valid in other real-world scenarios. Future studies will focus on relaxing these assumptions to explore the performance of data-driven I-DRO in environments with limited prior information and increased noise.

\bibliographystyle{IEEEtran}
\bibliography{ref}

\end{document}